\newtheorem{theorem}{Theorem}[section]
\newtheorem{hypothesis}[theorem]{Hypothesis}
\newtheorem{definition}[theorem]{Definition}
\newtheorem{experiment}[theorem]{Experiment}
\newcommand {\rank}     {\mathop{\rm rank}\nolimits}
\newcommand {\kernel}   {\mathop{\rm kernel}\nolimits}
\newcommand {\simnew}{\stackrel{\hbox to 0pt{\rm\scriptsize\hss new\hss}}{\sim}}
\newcommand {\simdif}{\stackrel{\hbox to 0pt{\rm\scriptsize\hss dif\hss}}{\sim}}
\newcommand {\ddt}{\vphantom{\tilde E}{\textstyle{d\over dt}}}
\def\.{^{\vphantom{T}}}
\def\w{\dot x,\ldots,x^{(\mu+1)}}
\def\z{t,x,\w}
\def\mat#1{\left[\begin{array}{#1}}
\def\rix{\end{array}\right]}
\def\msize#1#2{{#1\times #2}}
\def\L#1{{\mathbb L}^{#1}\kern-4pt\raise-2pt\hbox{$\scriptstyle{\mu^{#1}}$}}
\def\inner#1{{\mathop{\mathbb #1}\limits^{\kern3pt\raise-3pt\hbox{$\scriptscriptstyle\circ$}}}{}}
\def\Bbb{\mathbb}
\def\deriv#1#2{\langle #1,#2\rangle}
\def\GL{\mathop{\strut\rm GL}\nolimits}
\def\Sp{\mathop{\strut\rm Sp}\nolimits}
\def\O{\mathop{\strut\rm O}\nolimits}
\title{Discretization of inherent ODEs and the geometric integration of DAEs
with symmetries\thanks{Partially supported by the Research In Pairs program of
{\sl Mathematisches Forschungsinstitut Oberwolfach},
whose hospitality is gratefully acknowledged.}
}
\author{
Peter Kunkel\thanks{
Mathematisches Institut, Universit\"at Leipzig, Augustusplatz 10,
D-04109 Leipzig, Fed.\ Rep.\ Germany, \texttt{kunkel@math.uni-leipzig.de}.}
\and
Volker Mehrmann\thanks{
Institut f\"ur Mathematik, TU Berlin, Str.\ des 17.~Juni 136,
D-10623 Berlin, Fed.\ Rep.\ Germany, \texttt{mehrmann@math.tu-berlin.de}.
Partially supported by the {\it Deutsche Forschungsgemeinschaft} through
Project A2 of CRC 910 \emph{Control of self-organizing nonlinear systems: Theoretical methods and concepts of application}.
}}
\begin{document}

\maketitle
{\bf Abstract.}
Discretization methods for differential-algebraic equations (DAEs) are considered
that are based on the integration of an associated inherent ordinary differential equation (ODE).
This allows to make use of any discretization scheme suitable for the numerical integration of ODEs.
For DAEs with symmetries it is shown that the inherent ODE can be constructed in such a way
that it inherits the symmetry properties of the given DAE and geometric properties of its flow.
This in particular allows the use of geometric integration schemes with a numerical flow
that has analogous geometric properties.

{\bf Keywords.}
Differential-algebraic equation, inherent ordinary differential equation,
geometric integration, symplectic flow, orthogonal flow.

{\bf AMS(MOS) subject classification.} 37J06, 65L80, 65L05, 65P10.
\section{Introduction}\label{sec:intro}

We consider the numerical solution of general nonlinear systems
of differential-algebraic equations (DAEs)
\begin{equation}\label{nldae}
F(t,x,\dot x)=0,\quad
F\in C({\mathbb I}\times{\mathbb D}_x\times{\mathbb D}_{\dot x},{\mathbb R}^n)\mbox{ sufficiently smooth},
\end{equation}
where ${\mathbb D}_x,{\mathbb D}_{\dot x}\subseteq{\mathbb R}^n$ are open domains
and ${\mathbb I}\subseteq{\mathbb R}$ is a compact non-trivial interval,
together with a given initial condition
\begin{equation}\label{ic}
x(t_0)=x_0,\quad t_0\in{\mathbb I},\ x_0\in{\mathbb D}_x.
\end{equation}
For this task, numerous discretization schemes that work directly on \eqref{nldae}
or on some index-reduced reformulation have been given in the literature, see e.g.\ \cite{HaiLR88,HaiW96,KunM06}.
In this paper, we consider discretization schemes that work on a so-called
\emph{inherent ordinary differential equation} (ODE) of the given DAE.
The advantage of such an approach is that we can make use of
any discretization scheme suitable for the numerical integration of ODEs.
In particular, if we are able to choose the inherent ODE in such a way
that it inherits symmetry properties of the given DAE and thus special properties
of its flow, we may be in the situation to use \emph{geometric integration}, i.e., to use
special discretization schemes whose numerical flow possesses similar geometric properties, see \cite{HaiLW02}.
Especially in the latter case, we will concentrate on linear time-varying DAEs
\begin{equation}\label{lindae}
E(t) \dot x =A(t) x+f(t),\quad
\begin{array}{l}
E,A\in C({\mathbb I},{\mathbb R}^{n,n}),\
f\in C({\mathbb I},{\mathbb R}^{n})\mbox{ sufficiently smooth},
\end{array}
\end{equation}
where we are interested in the following symmetry properties.

\begin{definition}\label{def:self}
The DAE (\ref{lindae}) and its associated pair $(E,A)$ of matrix functions are called
\emph{self-adjoint} if
\begin{equation}\label{self}
E^T=-E,\quad A^T=A+\dot E
\end{equation}
as equality of functions.
\end{definition}

\begin{definition}\label{def:skew}
The DAE (\ref{lindae}) and its associated pair $(E,A)$ of matrix functions are called
\emph{skew-adjoint} if
\begin{equation}\label{skew}
E^T=E,\quad A^T=-A-\dot E
\end{equation}
as equality of functions.
\end{definition}

In the case of linear ODEs
\begin{equation}\label{linode}
\dot x=A(t)x+f(t)
\end{equation}
it is well-studied how symmetry properties of the matrix function~$A$ are transferred to properties
of the \emph{flow} $\Phi\in C^1({\mathbb I},{\mathbb R}^{n,n})$ defined by
\begin{equation}\label{flow}
\dot\Phi=A(t)\Phi,\quad\Phi(t_0)=I_n.
\end{equation}
In the context of geometric integration, one is especially interested in flows that lie
in a \emph{quadratic Lie group}
\begin{equation}\label{group}
{\mathbb G}=\{G\in\GL(n)\mid G^TXG=X\},
\end{equation}
with some given $X\in{\mathbb R}^{n,n}$ and $\GL(n)$ denoting the
general linear group of invertible matrices in ${\mathbb R}^{n,n}$.
In this case, there are then numerical
integration schemes such as \emph{Gau\ss\ collocation} which conserve quadratic invariants
such that their numerical flow lies
in the Lie group as well, see again \cite{HaiLW02}.
Actually, the flow lies in ${\mathbb G}$ when~$A$ lies pointwise
in the associated \emph{Lie algebra}
\begin{equation}\label{algebra}
{\mathbb A}=\{A\in{\mathbb R}^{n,n}\mid A^TX+XA=0\}.
\end{equation}
This can be seen from $\Phi(t_0)^TX\Phi(t_0)=X$ and
\[
\ddt(\Phi^TX\Phi)=\dot\Phi^TX\Phi+\Phi^TX\dot\Phi=
\Phi^TA^TX\Phi+\Phi^TXA\Phi=\Phi^T(A^TX+XA)\Phi=0.
\]
Following \cite{KunM22a},
we are concerned with the quadratic Lie group $\Sp(2p)$
of symplectic matrices related to
\begin{equation}\label{symplectic}
X=J,\quad J=\mat{cc}0&I_p\\-I_p&0\rix
\end{equation}
and the associated Lie algebra of \emph{Hamiltonian matrices} in the case of self-adjoint DAEs
and with the quadratic Lie group $\O(p,q)$ of \emph{generalized orthogonal matrices} related to
\begin{equation}\label{orthogonal}
X=S,\quad S=\mat{cc}I_p&0\\0&-I_q\rix
\end{equation}
in the case of skew-adjoint DAEs.

\section{Preliminaries}\label{sec:prelim}

In the following, we give a concise overview of the relevant theory on DAEs
that we make use of, see e.g.\ \cite{KunM06}.
The basis are the so-called \emph{derivative array equations}
\begin{equation}\label{infl}
F_\ell(t,x,\dot x,\ldots,x^{(\ell+1)})=0,
\end{equation}
see~\cite{Cam87a}, where $F_\ell$ has the form
\[
F_\ell(t,x,\dot x,\ldots,x^{(\ell+1)}) = \mat{c}
F(t,x,\dot x)\\
{\textstyle{d\over dt}}F(t,x,\dot x)\\
\vdots\\
\big({\textstyle{d\over dt}}\big)^\ell F(t,x,\dot x)
\rix
\]
with Jacobians (denoting the derivative of $F$ with respect to the variable $x$ by $F_x$
and accordingly)
\begin{equation}\label{jacs}
\begin{split}
M_\ell(t,x,\dot x,\dots,x^{(\ell+1)})&=
F_{\ell;\dot x,\dots,x^{(\ell+1)}}(t,x,\dot x,\dots,x^{(\ell+1)}),
\\
N_\ell(t,x,\dot x,\dots,x^{(\ell+1)})&=
-[\>F_{\ell;x}(t,x,\dot x,\dots,x^{(\ell+1)})\>\>0\>\>\ldots\>\>0\>].
\end{split}
\end{equation}
The following hypothesis then states sufficient conditions for the given DAE
to describe a \emph{regular problem}.

\begin{hypothesis}\label{hyp:hyp}
There exist integers $\mu$, $a$, and~$d$ such that the set
\begin{equation}\label{Ch4-set}
 {\Bbb L}_\mu=\{(\z)\in{\Bbb R}^{(\mu+2)n+1}\mid F_{\mu}(\z)=0\}
\end{equation}
associated with~$F$ is nonempty and such that
for every
$(t_0,x_0,\dot x_0,\ldots,x^{(\mu+1)}_0)\hskip-1pt\in\hskip-1pt {\Bbb L}_\mu$,
there exists a $($sufficiently small$)$ neighborhood
in which the following properties hold:
\begin{enumerate}
\item
We have $\rank M_{\mu}(\z)=(\mu+1)n-a$ on~${\Bbb L}_\mu$ such
that there exists a smooth matrix function~$Z_2$
of size $\msize{(\mu+1)n}{a}$ and pointwise maximal rank,
satisfying $Z_2^TM_{\mu}=0$ on ${\Bbb L}_\mu$.
\item
We have $\rank\hat A_2(\z)=a$, where
$\hat A_2=Z_2^TN_{\mu}[I_n\>0\>\cdots\>0]^T$
such that there exists a smooth matrix function~$T_2$
of size $\msize{n}{d}$, $d=n-a$, and pointwise maximal rank,
satisfying $\hat A_2T_2=0$.
\item
We have $\rank F_{\dot x}(t,x,\dot x)T_2(\z)=d$ such
that there exists a smooth matrix function~$Z_1$
of size $\msize{n}{d}$ and pointwise maximal rank,
satisfying $\rank\hat E_1T_2=d$, where $\hat E_1=Z_1^TF_{\dot x}$.
\end{enumerate}
\end{hypothesis}

Note that the local existence of functions $Z_2,T_2,Z_1$ can be guaranteed by the
application of the implicit function theorem, see~\cite[Theorem~4.3]{KunM06}.
Moreover, we may assume that they possess (pointwise) orthonormal columns.
Note also that due to the full rank requirement we may choose $Z_1$ to be constant.

Following the presentation in \cite{Kun15}, we use the shorthand notation
$y=(\dot x,\ldots,x^{(\mu+1)})$ and $\smash{y_0=(\dot x_0,\ldots,x^{(\mu+1)}_0)}$.
The system of nonlinear equations
\begin{equation}\label{implicit}
H(t,x,y,\alpha)=\left[\begin{array}{c}
F_\mu(t,x,y)-Z_{2,0}\alpha\\
T_{1,0}^T(y-y_0)
\end{array}\right],
\end{equation}
with the columns of $T_{1,0}$ forming an orthonormal basis of $\kernel F_{\mu;y}(t_0,x_0,y_0)$
and $Z_{2,0}=Z_2(t_0,x_0,y_0)$ according to Hypothesis~\ref{hyp:hyp},
is then locally solvable for $y,\alpha$ in terms of $(t,x)$ due to the
implicit function theorem. In particular, $\alpha=\hat F_2(t,x)$
with some function~$\hat F_2$. One can show that $\hat F_2(t,x)=0$ describes the
whole set of algebraic constraints implied by the original DAE.
Setting furthermore $\hat F_1(t,x,\dot x)=Z_1^TF(t,x,\dot x)$ yields
a so-called \emph{reduced DAE}
\begin{equation}\label{red1}
\begin{array}{ll}
\hat F_1(t,x,\dot x)=0,\qquad&\mbox{($d$ differential equations)}\\[1mm]
\hat F_2(t,x)=0,&\mbox{($a$ algebraic equations)}
\end{array}
\end{equation}
in the sense that it satisfies Hypothesis~\ref{hyp:hyp} with $\mu=0$.

Moreover, one can show that $\hat F_{2;x}$ possesses full row rank
implying that we can split $x$ possibly after a renumeration of the components
according to $x=(x_1,x_2)$ such that $\hat F_{2;x_2}$ is nonsingular.
The implicit function theorem then yields $x_2={\mathcal R}(t,x_1)$
with some function~${\mathcal R}$. Differentiating this relation to
eliminate $x_2$ and $\dot x_2$ in the first equation of \eqref{red1},
we can apply the implicit function theorem once more (requiring the
solvability of the DAE) yielding $x_1={\mathcal L}(t,x_1)$,
a so-called \emph{inherent ODE}, with some function~${\mathcal L}$.
Putting both parts together, we end up with a second kind of reduced DAE
\begin{equation}\label{red2}
\begin{array}{ll}
x_1={\mathcal L}(t,x_1),\qquad&\mbox{($d$ differential equations)}\\[1mm]
x_2={\mathcal R}(t,x_1).&\mbox{($a$ algebraic equations)}
\end{array}
\end{equation}

Note that, once we have fixed the splitting of the variables, the constructed functions
${\mathcal L}$ and ${\mathcal R}$ are unique. In particular,
the set ${\mathbb L}_{\mu+1}$ can be locally parameterized according to
\begin{equation}\label{par}
F_{\mu+1}(t,x_1,{\mathcal R}(t,x_1),{\mathcal L}(t,x_1),
{\mathcal R}_t(t,x_1)+{\mathcal R}_{x_1}(t,x_1){\mathcal L}(t,x_1),{\mathcal W}(t,x_1,p))\equiv0
\end{equation}
with a suitable parameter $p\in{\mathbb R}^a$ and a related function ${\mathcal W}$.

Under some technical assumptions, see \cite{KunM06}, the original DAE and the reduced DAEs \eqref{red1} and \eqref{red2}
possess the same solutions. As a consequence, we may discretize the
reduced DAEs instead of the original DAE utilizing the better properties of the latter ones.
But this requires the possibility to evaluate the implicitly defined functions.
In the case of $\hat F_2$ in \eqref{red1} the standard approach, see \cite{KunM06}, is to go back to
the definition of $\hat F_2$ in such a way that we replace $\hat F_2(t,x)=0$ by $F_\mu(t,x,y)=0$.

In the special case of linear time-varying DAEs \eqref{lindae},
the Jacobians $M_\mu,N_\mu$ used in Hypothesis~\ref{hyp:hyp} only depend on~$t$
such that the functions $Z_2,T_2,Z_1$ can be chosen to depend also only on~$t$.
The corresponding reduced DAE \eqref{red1} then takes the form
\begin{equation}\label{red1lin}
\arraycolsep 1.2pt
\begin{array}{rll}
\hat E_1(t)\dot x&=\hat A_1(t)x+\hat f_1(t),\qquad&\mbox{($d$ differential equations)}\\[1mm]
0&=\hat A_2(t)x+\hat f_2(t),&\mbox{($a$ algebraic equations)}
\end{array}
\end{equation}
where
\begin{equation}\label{red1lin1}
\begin{array}{lll}
\hat E_1=Z_1^TE,&\hat A_1=Z_1^TA,&\hat f_1=Z_1^Tf,\\
&\hat A_2=Z_2^TN_\mu[I_n\>0\>\cdots\>0]^T,&\hat f_2=Z_2^Tg_\mu
\end{array}
\end{equation}
with
\begin{equation}\label{red1lin2}
M_\mu=\mat{cccc}
E\\
\dot E-A&E\\
\ddot E-2\dot A&2\dot E-A&E\\
\vdots&\vdots&\ddots&\ddots\rix,\
N_\mu=\mat{cccc}
A&0&\cdots&0\\
\dot A&0&\cdots&0\\
\ddot A&0&\cdots&0\\
\vdots&\vdots&&\vdots\rix,\
g_\mu=\mat{c}
f\\
\dot f\\
\ddot f\\
\vdots\rix.
\end{equation}
The splitting of the variables as $x=(x_1,x_2)$ that leads to second form of a reduced DAE
corresponds to a splitting of $\hat A_2=[\>A_{21}\>\>A_{22}\>]$ with the
requirement that $A_{22}$ is pointwise nonsingular.
It is then obvious that we can solve
the second equation of (\ref{red1lin}) for $x_2$ in terms of $x_1$,
differentiate, and eliminate $x_2$ and $\dot x_2$ in the first equation
of (\ref{red1lin}) to obtain a linear version of (\ref{red2}).

In order to utilize global canonical forms as they were presented in \cite{KunM22a},
we observe that the construction of \eqref{red1lin} transforms covariantly
with global equivalence transformations as follows. Let $(\tilde E,\tilde A)$ be globally
equivalent to $(E,A)$, i.e., let sufficiently smooth, pointwise nonsingular
matrix functions $P\in C({\mathbb I},{\mathbb R}^{n,n})$ and
$Q\in C^1({\mathbb I},{\mathbb R}^{n,n})$ be given such that
\begin{equation}\label{equiv}
\tilde E=PEQ,\quad\tilde A=PAQ-PE\dot Q,
\end{equation}
describing scalings of the DAE \eqref{lindae} and the unknown~$x$, respectively.
The corresponding Jacobians are then related by
\begin{equation}\label{jacrel1}
\tilde M_\mu=\Pi_\mu M_\mu\Theta_\mu,\quad\tilde N_\mu=\Pi_\mu N_\mu\Theta_\mu-\Pi_\mu M_\mu\Psi_\mu
\end{equation}
with
\begin{equation}\label{jacrel2}
\Pi_\mu=\mat{cccc}
P\\
\dot P&P\\
\ddot P&2\dot P&P\\
\vdots&\vdots&\ddots&\ddots\rix,\
\Theta_\mu=\mat{cccc}
Q\\
2\dot Q&Q\\
3\ddot Q&3\dot Q&Q\\
\vdots&\vdots&\ddots&\ddots\rix,\
\Psi_\mu=\mat{cccc}
\dot Q&0&\cdots&0\\
\ddot Q&0&\cdots&0\\
\dddot Q&0&\cdots&0\\
\vdots&\vdots&&\vdots\rix.
\end{equation}
With given choices $Z_2,T_2,Z_1$ for $(E,A)$ along Hypothesis~\ref{hyp:hyp}
we may choose $\tilde Z_2,\tilde T_2,\tilde Z_1$ for $(\tilde E,\tilde A)$ as
\begin{equation}\label{redrel}
\tilde Z_2^T=Z_2^T\Pi_\mu^{-1},\quad
\tilde T_2=Q^{-1}T_2,\quad
\tilde Z_1^T=Z_1^TP^{-1}.
\end{equation}

Having summarized the theory for general nonlinear and linear time-varying DAEs,
the next section deals with the construction of a suitable inherent ODEs
for a given DAE.
\section{Construction and Evaluation of an Inherent ODE}\label{sec:inherent}

To get more flexibility into the choice of an inherent ODE,
we introduce a (linear but in general time-dependent)
transformation of the unknown~$x$ before we perform the splitting,
i.e., we consider
\begin{equation}\label{xtrf}
x=Q(t)\mat{c}x_1\\x_2\rix,
\end{equation}
where $Q\in C^1({\mathbb I},{\mathbb R}^{n,n})$ is sufficiently smooth
and pointwise nonsingular.
According to \cite[Lemma 4.6]{KunM06} the so transformed DAE \eqref{nldae}
satisfies Hypothesis~\ref{hyp:hyp} as well with the same characteristic values $\mu,a,d$.
As before, the only requirement for~$Q$ is that we can solve the algebraic constraints
for $x_2$ in terms of~$x_1$. Writing
\begin{equation}\label{Qsplit}
Q=[\>T_2\.\>\>T_2'\>],
\end{equation}
the algebraic constraints read
\[
\hat F_2(t,T_2\.x_1+T_2'x_2)=0.
\]
Hence, in order to be able to solve for~$x_2$ we need
$\hat F_{2;x}T_2'$ to be pointwise nonsingular.
If this is the case, then the chosen $Q$ fixes a reduced DAE
of the form \eqref{red2} satisfying
\begin{equation}\label{partrf}
\begin{array}{l}
F_{\mu+1}\left(t,Q(t)\mat{c}x_1\\x_2\rix,
Q(t)\mat{c}\dot x_1\\\dot x_2\rix+
\dot Q(t)\mat{c}x_1\\x_2\rix,{\mathcal W}(t,x_1,p)\right)\equiv0,\\[4mm]
\quad \dot x_1={\mathcal L}(t,x_1),\ x_2={\mathcal R}(t,x_1),\
\dot x_2={\mathcal R}_t(t,x_1)+{\mathcal R}_{x_1}(t,x_1){\mathcal L}(t,x_1)
\end{array}
\end{equation}
with a suitable parameter $p\in{\mathbb R}^a$ and a related function ${\mathcal W}$.

For a numerical realization, we are confronted with two problems.
First, we must be able to evaluate the implicitly defined functions ${\mathcal L}$
and ${\mathcal R}$.
Second, for a nontrivial choice of $Q$ we must have access to $\dot Q$.

In the next subsections, we discuss how to overcome these problems.

\subsection{Numerical Evaluation of the Inherent ODE}\label{sub:eval}

The first problem can be dealt with by solving the system of (nonlinear) equations
\begin{equation}\label{red2eval}
F_{\mu+1}(t,x,\dot x,w)=0,\quad[\>I_d\>\>0\>]Q(t)^{-1}x=x_1
\end{equation}
for given $(t,x_1)$. Because of the first part in \eqref{red2eval},
at a solution, the resulting $(t,x,\dot x,w)$ must satisfy
\[
x=Q(t)\mat{c}x_1\\{\mathcal R}(t,x_1)\rix,\
\dot x=Q(t)\mat{c}{\mathcal L}(t,x_1)\\{\mathcal R}_t(t,x_1)+{\mathcal R}_{x_1}(t,x_1){\mathcal L}(t,x_1)\rix+\dot Q(t)\mat{c}x_1\\{\mathcal R}(t,x_1)\rix.
\]
Because of the second part in \eqref{red2eval}, we regain the prescribed $x_1$.
Furthermore, we observe that
\[
{\mathcal R}(t,x_1)=[\>0\>\>I_a\>]Q(t)^{-1}x,\
{\mathcal L}(t,x_1)=[\>I_d\>\>0\>]Q(t)^{-1}(\dot x-\dot Q(t)Q(t)^{-1}x)
\]
yielding the required evaluations of ${\mathcal L}$ and ${\mathcal R}$.

Since \eqref{red2eval} constitutes an underdetermined system of equations,
the method of choice to solve \eqref{red2eval} numerically is the \emph{Gau\ss-Newton method}.
In order to show that the Gau\ss-Newton method will convergence quadratically
for sufficiently good starting values, we need to show that the Jacobian at a solution
possesses full row rank, see e.g.~\cite{Deu04}.

\begin{theorem}\label{th:quad}
Let \eqref{nldae} satisfy Hypothesis~\ref{hyp:hyp} both with $\mu,a,d$ and with $\mu+1,a,d$.
Then, the Jacobian of \eqref{red2eval} possesses full row rank at every solution
provided that $\smash{\hat F_{2;x}\.T_2'}$ is pointwise nonsingular.
\end{theorem}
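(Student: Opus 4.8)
The plan is to show that the Jacobian of the system \eqref{red2eval} with respect to the unknowns $(x,\dot x,w)$ has full row rank $(\mu+2)n+d$ at any solution, by exhibiting a block structure that separates the ``$F_{\mu+1}$-part'' from the ``parametrization-part'' $[\>I_d\>\>0\>]Q^{-1}x=x_1$. First I would write down the Jacobian explicitly: the first block row is $[\>F_{\mu+1;x}\ \ F_{\mu+1;\dot x}\ \ F_{\mu+1;w}\>]$ evaluated at the solution, and the second block row is $[\>[\>I_d\>\>0\>]Q^{-1}\ \ 0\ \ 0\>]$. The key observation, which I would record first, is that by Hypothesis~\ref{hyp:hyp} applied with the characteristic values $\mu+1,a,d$, the matrix $M_{\mu+1}=F_{\mu+1;(\dot x,\dots,x^{(\mu+2)})}$ has rank $(\mu+2)n-a$, i.e.\ corank $a$, and $N_{\mu+1}$ (essentially $F_{\mu+1;x}$ padded with zeros) restricted appropriately recovers the $a$ missing directions. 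So the combined Jacobian $[\>N_{\mu+1}[I_n\>0\cdots 0]^T\ \ M_{\mu+1}\>]$ of $F_{\mu+1}$ with respect to all of $(x,\dot x,\dots,x^{(\mu+2)})$ already has corank exactly $a$; its left null space is spanned by $Z_{2,(\mu+1)}^T$.

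The next step is to identify precisely which $a$-dimensional space of row-combinations annihilates the $F_{\mu+1}$-block, and then show that the second block row $[\>I_d\>\>0\>]Q^{-1}$ is ``transversal'' to the obstruction coming from those combinations. Concretely: suppose $v^T[\>F_{\mu+1;x}\ \ F_{\mu+1;\dot x}\ \ F_{\mu+1;w}\>]+u^T[\>I_d\>\>0\>]Q^{-1}=0$ for some $v\in{\mathbb R}^{(\mu+2)n}$, $u\in{\mathbb R}^d$; I must show $v=0$ and $u=0$. Looking at the columns corresponding to the ``highest'' derivative $x^{(\mu+2)}$ (or to $w$, depending on how the parametrization variable $w$ is set up via \eqref{par}), the term $u^T[\>I_d\>\>0\>]Q^{-1}$ does not touch those, so $v$ must lie in the left null space of $M_{\mu+1}$, i.e.\ $v^T=\eta^T Z_{2,(\mu+1)}^T$ for some $\eta\in{\mathbb R}^a$. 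Then the remaining columns give $\eta^T Z_{2,(\mu+1)}^T N_{\mu+1}[I_n\>0\cdots 0]^T + u^T[\>I_d\>\>0\>]Q^{-1}=0$, and $Z_{2,(\mu+1)}^T N_{\mu+1}[I_n\>0\cdots 0]^T$ is exactly $\hat A_2$ (more precisely $\hat F_{2;x}$, up to the usual identification) evaluated at the solution. Writing $Q=[\>T_2'\>\>\cdots\>]$ — here I would need to be a little careful about which block of $Q^{-1}$ pairs with $T_2'$, using \eqref{Qsplit}–\eqref{partrf} — the identity becomes $\eta^T\hat F_{2;x}+u^T[\>I_d\>\>0\>]Q^{-1}=0$, and multiplying on the right by the columns of $Q$ that span the ``$x_2$-directions'', i.e.\ by the block complementary to $T_2$, kills the $u$-term and leaves $\eta^T\hat F_{2;x}T_2'=0$. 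Since $\hat F_{2;x}T_2'$ is pointwise nonsingular by hypothesis, $\eta=0$, hence $v=0$, and then $u^T[\>I_d\>\>0\>]Q^{-1}=0$ forces $u=0$ because $Q^{-1}$ is nonsingular. This gives full row rank.

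The main obstacle I anticipate is bookkeeping rather than conceptual: matching the parametrization variable $w$ in \eqref{red2eval} with the structure in \eqref{par}/\eqref{partrf} so that the claim ``the columns for the topmost derivatives see only $M_{\mu+1}$'' is literally true, and keeping straight which sub-blocks of $Q$ and $Q^{-1}$ appear where (the paper uses both $[\>T_2\>\>T_2'\>]$ in \eqref{Qsplit} and a generic nonsingular $Q$, so I would fix one convention and translate). A secondary point needing care is that Hypothesis~\ref{hyp:hyp} is invoked \emph{twice}, with $\mu$ and with $\mu+1$; the $\mu$-level is what guarantees $\hat F_2$ is well defined and $\hat F_{2;x}$ has full row rank $a$ in the first place, while the $\mu+1$-level supplies the corank-$a$ statement for $M_{\mu+1}$ and the identification of its left null space with $Z_{2,(\mu+1)}^T$. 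Once those two ingredients are in place and the block columns are arranged correctly, the rank argument is the short transversality computation sketched above. I would also remark that the covariance relations \eqref{jacrel1}–\eqref{redrel} could be used to reduce to the case $Q=I_n$ at the outset, which would streamline the second paragraph considerably, at the cost of one extra invocation of \cite[Lemma 4.6]{KunM06}.
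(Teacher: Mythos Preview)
Your approach is essentially the same as the paper's: both arguments boil down to showing that, after stripping off the $(\dot x,w)$ columns (where $M_{\mu+1}$ has corank $a$ and the second block row contributes nothing), the remaining $a+d$ rows acting on the $x$ columns yield the block matrix
\[
\mat{cc}\hat F_{2;x}\.T_2\.&\hat F_{2;x}\.T_2'\\I_d&0\rix,
\]
which is nonsingular precisely when $\hat F_{2;x}\.T_2'$ is. The paper reaches this by premultiplying the $F_{\mu+1}$-block by $[\>Z_2'\>\>Z_2\.\>]^T$ and reading off a block-triangular structure; you reach it by the dual left-null-space computation $v^TM_{\mu+1}=0\Rightarrow v=Z_2\eta$, then testing the $x$ columns. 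The identification you flag as ``up to the usual identification'' is exactly the relation $\hat F_{2;x}=(Z_2^TZ_{2,0})^{-1}Z_2^TF_{\mu+1;x}$ that the paper extracts from \eqref{implicit} at level $\mu+1$; you should make that step explicit rather than leave it as a parenthetical.

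Two slips to fix. First, your sentence ``the combined Jacobian $[\>N_{\mu+1}[I_n\>0\cdots 0]^T\ \ M_{\mu+1}\>]$ \ldots\ already has corank exactly $a$; its left null space is spanned by $Z_{2,(\mu+1)}^T$'' is false: that combined matrix has \emph{full} row rank (this is precisely Parts~1 and~2 of Hypothesis~\ref{hyp:hyp} at level $\mu+1$), and its left null space is trivial. Fortunately you never use this claim; your actual argument correctly uses only that the left null space of $M_{\mu+1}$ alone is spanned by $Z_2$. Second, ``looking at the columns corresponding to the highest derivative $x^{(\mu+2)}$'' is not enough to force $v^TM_{\mu+1}=0$; you need all of the $(\dot x,w)$ columns, which is fine since the second block row vanishes on all of them. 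With those corrections your proof is complete and matches the paper's.
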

\begin{proof}
Due to \eqref{implicit} for $\mu+1$ replacing $\mu$ we have
\[
F_{\mu+1;x}-Z_{2,0}\hat F_{2;x}=0,
\]
omitting for convenience the arguments here and later. Hence,
\[
\hat F_{2;x}=(Z_2^TZ_{2,0}\.)^{-1}Z_2^TF_{\mu+1;x}
\]
in a sufficiently small neighborhood.
Completing $Z_2$ to a pointwise nonsingular matrix function $[\>Z_2'\>\>Z_2\.\>]$,
elementary row operations of the Jacobian of the first part in \eqref{red2eval} yield
\[
\mat{cc}F_{\mu+1;x}&F_{\mu+1;\dot x,\ldots,x^{(\mu+2)}}\rix\rightarrow
\mat{cc}Z_2'^{T}F_{\mu+1;x}&Z_2'^{T}F_{\mu+1;\dot x,\ldots,x^{(\mu+2)}}\\
Z_2^TF_{\mu+1;x}&0\rix.
\]
According to Hypothesis~\ref{hyp:hyp} the entry $\smash{Z_2'^{T}F_{\mu+1;\dot x,\ldots,x^{(\mu+2)}}}$
possesses full row rank such that we are left with the entry $Z_2^TF_{\mu+1;x}$ together
with the Jacobian $[\>I_d\>\>0\>]Q^{-1}$ of the second equation in \eqref{red2eval}.
Multiplying the first part with $(Z_2^TZ_{2,0}\.)^{-1}$ from the left and both parts
with $Q$ from the right yields the matrix function
\[
\mat{cc}\hat F_{2;x}T_2\.&\hat F_{2;x}T_2'\\I_d&0\rix
\]
which is pointwise nonsingular provided that $\hat F_{2;x}T_2'$ is pointwise nonsingular.
\end{proof}

\subsection{Numerical Construction of the Transformation}\label{sub:trans}

It remains the question how we can deal with $\dot Q$ in extracting
the evaluation of ${\mathcal L}(t,x_1)$.
In particular, we are interested in applications where a trivial choice
as constant $Q$ or beforehand given $Q$ with implemented functions
to evaluate both $Q(t)$ and $\dot Q(t)$ is not possible but where
$Q$ has to be chosen numerically during the integration of the DAE.
The main problem in this context is that we must choose $Q$ in a smooth way,
at least on the current interval $[t_0,t_0+h]$ of the numerical integration
with $h>0$ sufficiently small, and that we must be able to evaluate~$\dot Q$.

The approach we will follow here is automatic differentiation, see \cite{Gri89}.
This means that we work not only with the value of a variable
but with a pair of numbers that represent the value and the derivative of a variable.
Operations on such pairs are then defined by means of the known differentiation rules.
If we use the notation $\deriv{x}{\dot x}$ for such a pair, the typical operations
used in linear algebra then read
\begin{equation}\label{oper}
\begin{array}{ll}
\mbox{\rm(a)\qquad}
\deriv{x}{\dot x}+\deriv{y}{\dot y}=\deriv{x+y}{\dot x+\dot y},\\[1mm]
\mbox{\rm(b)\qquad}
\deriv{x}{\dot x}-\deriv{y}{\dot y}=\deriv{x-y}{\dot x-\dot y},\\[1mm]
\mbox{\rm(c)\qquad}
\deriv{x}{\dot x}\cdot\deriv{y}{\dot y}=\deriv{x\cdot y}{\dot x\cdot y+x\cdot\dot y},\\[1mm]
\mbox{\rm(d)\qquad}
\deriv{x}{\dot x}/\deriv{y}{\dot y}=\deriv{x/y}{(\dot x-x\cdot\dot y/y)/y},\\[1mm]
\mbox{\rm(e)\qquad}
\sqrt{\deriv{x}{\dot x}}=\deriv{\sqrt x}{\frac12\dot x/\sqrt x}.
\end{array}
\end{equation}
These operations can be obviously extended in a componentwise way to vector and matrix operations.

Note that in a programming language like \texttt{C++} this approach can be implemented
by defining a corresponding new class and overloading the above operations
to work with this class. In this way it is possible to perform tasks
of linear algebra like Cholesky decomposition $A=L\cdot L^T$ in a smooth way
yielding $\deriv{L}{\dot L}$ for given $\deriv{A}{\dot A}$. This is valid
for all numerical algorithms that do not include if-clauses.
If there are if-clauses,
as for example in the QR decomposition $A\cdot\Pi=Q\cdot R$, then
we can at least locally get a smooth version. To do this for
the QR decomposition,
we may proceed as follows. For a reference point, typically $t_0$,
we perform a standard QR decomposition $A(t_0)\cdot\Pi_0=Q_0\cdot R_0$.
We then freeze all if-clauses and use automatic differentiation in
the evaluation of the QR decomposition $A\cdot\Pi_0=Q\cdot R$.
In this way, we get $\deriv{Q}{\dot Q}$ and $\deriv{R}{\dot R}$
for given $\deriv{A}{\dot A}$.

In particular, we can use this approach to perform the construction of reduced DAEs for
linear time-varying systems as described in Section~\ref{sec:prelim}
with the aim to get not only values for the involved transformations
but also values for their derivatives.

To start the construction of the reduced system \eqref{red1lin}, we need $\dot M_\mu,\dot N_\mu,\dot g_\mu$
besides $M_\mu,N_\mu,g_\mu$. Writing $M,N,g$ for the formally infinite
extensions of $M_\mu,N_\mu,g_\mu$ and defining
\[
S=\mat{cccc}0\\I_n&0\\&I_n&0\\&&\ddots&\ddots\rix,\quad
V=\mat{c}I_n\\0\\0\\\vdots\rix
\]
we have the relations
\[
\dot M=S^TM-MS^T+N,\quad\dot N=S^TN,\quad\dot g=S^Tg,
\]
see \cite{CamK09}. Hence, from the evaluations $M_{\mu+1},N_{\mu+1},g_{\mu+1}$
we can actually retrieve the desired
$\deriv{M_\mu}{\dot M_\mu},\deriv{N_\mu}{\dot N_\mu},\deriv{g_\mu}{\dot g_\mu}$.
A first locally smooth QR decomposition then yields $\deriv{Z_2}{\dot Z_2}$
and thus $\smash{\deriv{\hat A_2}{\ddt{\hat A}_2}}$.
A second locally smooth QR decomposition then gives $\deriv{T_2}{\dot T_2}$
and with a third locally smooth QR decomposition for $\deriv{E}{\dot E}\cdot\deriv{T_2}{\dot T_2}$
we finally get $\deriv{Z_1}{\dot Z_1}$. In the latter case we can also use a
standard QR decomposition once at $t_0$
and use the so obtained $Z_{1,0}$ to set $\deriv{Z_1}{\dot Z_1}=\deriv{Z_{1,0}}{0}$
if it seems more suited.
The remaining quantities of the reduced DAE are then given by automatic differentiation
along the lines of~\eqref{red1lin1}.

With a given choice $\deriv{Q}{\dot Q}$ for fixing an inherent ODE,
transforming the reduced DAE \eqref{red1} by means of \eqref{xtrf} yields
\[
\arraycolsep 1.2pt
\begin{array}{rl}
\hat E_{11}(t)\dot x_1+\hat E_{12}(t)\dot x_2&=\hat A_{11}(t)x_1+\hat A_{12}(t)x_2+\hat f_1(t),\\[1mm]
0&=\hat A_{21}(t)x_1+\hat A_{22}(t)x_2+\hat f_2(t),
\end{array}
\]
where
\[
\begin{array}{ll}
\hat E_{11}=\hat E_1T_2,&\hat E_{12}=\hat E_1T_2',\\
\hat A_{11}=\hat A_1T_2-\hat E_1\dot T_2,&\hat A_{12}=\hat A_1T_2'-\hat E_1\dot T_2',\\
\hat A_{21}=\hat A_2T_2,&\hat A_{22}=\hat A_2T_2',
\end{array}
\]
and we are in the same situation as in the special case described in Section~\ref{sec:prelim}.
In particular, we can solve for $x_2$, differentiate, eliminate, and solve for $\dot x_1$
to get the fixed inherent ODE.

A special choice of $Q$ can be obtained by a locally smooth QR decomposition
of $\smash{\deriv{\hat E_1^T}{\ddt{\hat E}_1^T}}$ leading to $\hat E_{12}=0$.
Hypothesis~\ref{hyp:hyp} then guarantees that $\hat A_{22}$ is pointwise nonsingular.
If we set $Q_0=Q(t_0)$ and $\dot Q_0=\dot Q(t_0)$, we may also replace $Q$
by the constant version $Q(t)=Q_0$ or
by the linearized version $Q(t)=Q_0+(t-t_0)\dot Q_0$. The latter corresponds
to the construction of so-called \emph{spin-stabilized integrators} introduced in \cite{KunM07}.
In the case that $\mu=0$, the constructions can be simplified by using $E$ instead of $\hat E_1$
since no construction of a reduced system is required.

\section{Symmetries and Geometric Integration}\label{sec:symmetries}

In this section we treat linear time-varying DAEs that are self-adjoint
or skew-adjoint. The aim is to utilize the symmetry in the construction
of a suitable inherent ODE such that it inherits certain properties of the original DAE.
Note that self-adjointness and skew-adjointness are invariant under so-called
congruence, i.e., under global equivalence \eqref{equiv} with $P=Q^T$, see e.g.~\cite{KunM22a}.
As there, we will write $(\tilde E,\tilde A)\equiv(E,A)$ to indicate that the pairs are congruent.
Note also that regularity of a pair $(E,A)$ of sufficiently smooth matrix function
$E,A\in C({\mathbb I},{\mathbb R}^{n,n})$ is necessary and sufficient for the
asscociated DAE \eqref{lindae} to satisfy Hypothesis~\ref{hyp:hyp}, see e.g.~\cite{KunM06}.

\subsection{Self-Adjoint DAEs}\label{sub:self}
Assuming \eqref{self} for \eqref{lindae}, we will make use of the following
global canonical form taken from \cite{KunM22a} in a slightly rephrased version.

\begin{theorem}\label{th:gcfself}
Let $(E,A)$ with $E,A\in C({\mathbb I},{\mathbb R}^{n,n})$ be sufficiently smooth
and let the associated DAE \eqref{lindae} satisfy Hypothesis~\ref{hyp:hyp}.
If $(E,A)$ is self-adjoint, then we have that
\begin{equation}\label{gcfself}
(E,A)\equiv\left(
\left[\begin{array}{ccc} 0&I_{p}&0\\-I_{p}&0&0\\0&0&E_{33} \end{array}\right],
\left[\begin{array}{ccc} 0&0&0\\0&A_{22}&A_{23}\\0&A_{32}&A_{33} \end{array}\right]\right),
\end{equation}
where
\begin{equation}\label{gcfselfq}
E_{33}(t)\dot x_3=A_{33}(t)x_3+f_3(t),
\end{equation}
is uniquely solvable for every sufficiently smooth~$f_3$
without specifying initial conditions. Furthermore,
\begin{equation}\label{gcfselfp}
E_{33}^T=-E_{33}\.,\quad A_{22}^T=A_{22}\.,\quad A_{32}^T=A_{23}\.,\quad A_{33}^T=A_{33}+\dot E_{33}.
\end{equation}
\end{theorem}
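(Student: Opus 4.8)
The plan is to combine the general reduction theory of Section~\ref{sec:prelim} with the self-adjointness relations \eqref{self}, exploiting that self-adjointness is preserved under congruence. First I would note that, since $(E,A)$ satisfies Hypothesis~\ref{hyp:hyp}, we can perform the reduction to the first reduced form \eqref{red1lin}. The key is that we should not pick arbitrary $Z_1,Z_2,T_2$ but rather exploit the structure: because $E^T=-E$, the matrix function $M_\mu$ in \eqref{red1lin2} has a symmetry relating it to $N_\mu$, and hence the left nullspace data $Z_2$ of $M_\mu$ can be related to the column nullspace data $T_2$. Concretely, one expects $\hat A_2 = Z_2^T N_\mu [I_n\ 0\ \cdots]^T$ to satisfy $\hat A_2 T_2 = 0$ automatically (this is part of Hypothesis~\ref{hyp:hyp}), and self-adjointness should force $T_2^T E T_2 = 0$ as well, i.e.\ the ``differential'' block restricted to the algebraic directions degenerates in the skew way. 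The congruence $P = Q^T$ with $Q = [\,T_2\ \ T_2'\,]$ (or a suitable orthogonal completion) then block-decomposes $(E,A)$ and preserves self-adjointness.

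The main steps I would carry out, in order: (1) Using \eqref{self} and the explicit forms \eqref{red1lin2}, show that $M_\mu$ and $N_\mu$ are linked by a fixed constant involution-type matrix; deduce that one may choose $Z_2$ and $T_2$ compatibly so that the algebraic part and the constraint-annihilator part are ``adjoint'' to each other, which geometrically separates the state space into a purely algebraic/constraint part and the rest. (2) Perform the congruence transformation with $Q$ built from $T_2$ and a complement, obtaining a $2\times 2$ (then $3\times 3$) block form in which the lower-right block $(E_{33},A_{33})$ carries the ``regular, uniquely solvable without initial conditions'' part and the upper-left blocks carry the constraint/algebraic structure; self-adjointness is inherited by each diagonal sub-pair and couples the off-diagonal blocks. (3) Inside the constraint part, use that the relevant Jacobian ($\hat F_{2;x}$ restricted appropriately, equivalently $A_{22}$ after the splitting) is nonsingular and symmetric — here one invokes that, by Hypothesis~\ref{hyp:hyp} together with self-adjointness, the pointwise nonsingular algebraic block $A_{22}$ is forced to be symmetric, $A_{22}^T = A_{22}$, while the associated $E$-block vanishes (giving the $0$ in the $(1,1)$ position and the $\pm I_p$ symplectic block from the remaining even-dimensional skew piece). (4) Read off \eqref{gcfselfp} by transposing the transformed relations $\tilde E^T = -\tilde E$ and $\tilde A^T = \tilde A + \dot{\tilde E}$ blockwise, and establish unique solvability of \eqref{gcfselfq} from the fact that $(E_{33},A_{33})$ inherits Hypothesis~\ref{hyp:hyp} with $a=0$ and, by the self-adjoint structure, in fact has no remaining algebraic constraints.

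The hard part will be step~(1)–(3): making precise how self-adjointness forces the block pattern, in particular why the purely algebraic block $A_{22}$ is simultaneously nonsingular and symmetric and why the ``differential'' directions that are not genuinely dynamic organize into the canonical $2p$-dimensional symplectic block $\bigl[\begin{smallmatrix}0&I_p\\-I_p&0\end{smallmatrix}\bigr]$ with vanishing $A$-block. This amounts to a careful bookkeeping of ranks: $E$ is skew, so $\rank E$ is even, and the kernel/cokernel directions of $E$ interact with $A$ through the relation $A^T = A + \dot E$ in a way that, combined with regularity, pins down the Kronecker-type structure pointwise and then globally by the smoothness of the chosen bases. Since the global canonical form itself is quoted from \cite{KunM22a}, I would lean on that reference for the existence of the smooth congruence and concentrate the argument here on verifying that the reduction machinery of Section~\ref{sec:prelim}, when fed self-adjoint data, produces exactly this block structure, and on checking the symmetry relations \eqref{gcfselfp} by direct transposition. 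The unique solvability of \eqref{gcfselfq} is then essentially the statement that the $(3,3)$-subsystem is regular with characteristic value $a=0$, which follows because all constraints have been collected into the first two block rows.
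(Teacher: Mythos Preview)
The paper does not prove this theorem; it is quoted from \cite{KunM22a} and used as input for the subsequent construction. So there is no ``paper's proof'' to compare against, and your closing remark that you would ``lean on that reference'' is in fact the entire content of what the paper does here.

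That said, your sketch contains a genuine confusion about the structure of the canonical form that would derail an actual proof. You repeatedly treat the $(E_{33},A_{33})$ block as the dynamical part and the first two block rows as carrying the constraints, but it is the other way around: the upper-left $2p\times 2p$ block $\bigl[\begin{smallmatrix}0&I_p\\-I_p&0\end{smallmatrix}\bigr]$ of $E$ is the nonsingular symplectic piece that carries the $d=2p$ differential degrees of freedom, while $(E_{33},A_{33})$ is the \emph{purely algebraic} remainder. ``Uniquely solvable for every $f_3$ without specifying initial conditions'' means this sub-DAE has $d=0$, not $a=0$ as you write in step~(4); it is the nilpotent/strangeness part, not an ODE. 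Correspondingly, your claim that ``all constraints have been collected into the first two block rows'' is backwards.

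A second, related error: you identify the $A_{22}$ of the canonical form with the $\hat A_{22}$ of the reduced-DAE splitting in Section~\ref{sec:inherent} and assert it is pointwise nonsingular. In \eqref{gcfself} the block $A_{22}$ sits in the \emph{differential} part (second block row and column, size $p\times p$) and is only required to be symmetric by \eqref{gcfselfp}; nothing forces it to be nonsingular. The nonsingularity needed for solvability resides in the $(E_{33},A_{33})$ sub-pair, not in $A_{22}$. Any proof attempt that tries to pin down the block pattern via ``$A_{22}$ nonsingular'' will not go through.
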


In order to construct a suitable reduced DAE \eqref{red1lin}, we follow the lines
of Hypothesis~\ref{hyp:hyp} for the global canonical form, indicated by tildes, and start with
\[
\tilde M_\mu=\mat{ccc|ccc|ccc|c}
0&I_{p}&0&&&&&&&\\
-I_{p}&0&0&&&&&&&\\
0&0&E_{33}&&&&&&&\\\hline
0&0&0&0&I_{p}&0&&&&\\
0&-A_{22}&-A_{23}&-I_{p}&0&0&&&&\\
0&-A_{32}&\dot E_{33}-A_{33}&0&0&E_{33}&&&&\\\hline
0&0&0&0&0&0&0&I_{p}&0\\
0&-2\dot A_{22}&-2\dot A_{23}&0&-A_{22}&-A_{23}&-I_{p}&0&0\\
0&-2\dot A_{32}&\ddot E_{33}-2\dot A_{33}&0&-A_{32}&2\dot E_{33}-A_{33}&0&0&E_{33}\\\hline
\vdots&\vdots&\vdots&\vdots&\vdots&\vdots&\vdots&\vdots&\vdots&\ddots
\rix.
\]
Due to the identities, the only possible rank-deficiency is related to the part
belonging to the pair $(E_{33},A_{33})$. The properties of \eqref{gcfselfq}
then imply that $d=2p$ and $a=n-2p$ in Hypothesis~\ref{hyp:hyp}.
Furthermore, the left null space of~$\tilde M_\mu$ is described by
\[
\tilde Z_2^T=\mat{ccc|ccc|ccc|c}
*&0&\tilde Z_{2,0}^T&*&0&\tilde Z_{2,1}^T&*&0&\tilde Z_{2,2}^T&\cdots
\rix.
\]
Observing that
\[
\tilde N_\mu[I_n\>0\>\cdots\>0]^T=\mat{ccc}
0&0&0\\
0&A_{22}&A_{23}\\
0&A_{32}&A_{33}\\\hline
0&0&0\\
0&\dot A_{22}&\dot A_{23}\\
0&\dot A_{32}&\dot A_{33}\\\hline
0&0&0\\
0&\ddot A_{22}&\ddot A_{23}\\
0&\ddot A_{32}&\ddot A_{33}\\\hline
\vdots&\vdots&\vdots
\rix,
\]
we get
\[
\hat A_2=\mat{ccc}0&\hat A_{32}&I_a\rix
\]
for the second part of Hypothesis~\ref{hyp:hyp}, where the identity
comes from a special choice of $\tilde Z_2^T$. Choosing
\[
\tilde T_2=\mat{cc}I_p&0\\0&I_p\\0&-\hat A_{32}\rix
\]
and $\tilde Z_1=\tilde T_2$ yields
\[
\tilde Z_1^T\tilde E\tilde T_2\.=
\mat{ccc}I_p&0&0\\0&I_p&-\hat A_{32}^T\rix
\mat{ccc}0&I_{p}&0\\-I_{p}&0&0\\0&0&E_{33}\rix
\mat{cc}I_p&0\\0&I_p\\0&-\hat A_{32}\rix=
\mat{cc}0&I_{p}\\-I_{p}&\hat A_{32}^TE_{33}\.\hat A_{32}\.\rix,
\]
which is indeed pointwise nonsingular, thus satisfying the third part of Hypothesis~\ref{hyp:hyp}.
In particular, the special choice $\tilde Z_1=\tilde T_2$ is possible.
According to \eqref{redrel} with $P=Q^T$ we can also choose $Z_1=T_2$
for the original pair such that the reduced DAE inherits some symmetry properties
of the original DAE. Note also that we may assume that $T_2$ possesses pointwise
orthonormal columns.

By construction, the matrix function $T_2^TET_2\.$ is not only pointwise skew-symmetric
but also pointwise nonsingular. We can then proceed similar to~\cite{KunMS14}.
Setting
\[
T_2^TET_2\.=\mat{cc}\bar E&c\\-c^T&0\rix,
\]
there exists a smooth pointwise orthogonal transformation~$U$ with $U^Tc=\alpha e_1$, $\alpha\ne0$,
where $e_1$ denotes the first canonical basis vector of appropriate size, see e.g.~\cite[Theorem 3.9]{KunM06}.
It follows that
\[
\mat{cc}U\\&1\rix^T\!
\mat{cc}\bar E&c\\-c^T&0\rix
\mat{cc}U\\&1\rix=
\mat{cc}U^T\bar EU&\alpha e_1\\-\alpha e^T&0\rix=
\mat{ccc}*&*&\alpha\\{}*&\bar{\bar E}&0\\-\alpha&0&0\rix,
\]
where $\bar{\bar E}$ is again skew-symmetric and pointwise nonsingular.
Thus, inductively after~$p$ steps, we arrive at
\[
W_1^TT_2^TET_2\.W_1\.=\mat{cc}\tilde E_{11}&\tilde E_{12}\\-\tilde E_{12}^T&0\rix,
\]
where $W_1$ collects all the applied transformations.
By construction, $\tilde E_{11}$ is skew-symmetric and $\tilde E_{12}$
is anti-triangular and pointwise nonsingular.
Finally, setting
\[
W_2=\mat{cc}I_p&0\\-\frac12\tilde E_{12}^{-1}\tilde E_{11}\.&\tilde E_{12}^{-1}\rix
\]
yields
\[
W_2^TW_1^TT_2^TET_2\.W_1\.W_2\.=\mat{cc}0&I_p\\-I_p&0\rix=J.
\]

For convenience, we write again $T_2$ instead of the transformed $T_2W_1W_2$.
Completing $T_2$ to a pointwise nonsingular~$Q$
according to \eqref{Qsplit}, we get
\[
Q^TEQ=\mat{cc}J&\hat E_{12}\\{}*&*\rix,\quad
Q^TAQ-Q^TE\dot Q=\mat{cc}C&\hat A_{12}\\{}*&*\rix.
\]
Since self-adjointness is invariant under congruence and $J$ is constant,
the matrix function $C$ is pointwise symmetric.
With \eqref{xtrf} the reduced DAE transforms to
\[
\arraycolsep 1.2pt
\begin{array}{rl}
J\dot x_1+\hat E_{12}(t)\dot x_2&=C(t)x_1+\hat A_{12}(t)x_2+T_2(t)^Tf(t),\\[1mm]
0&=\hat A_{22}(t)x_2+\hat f_2(t),
\end{array}
\]
where $\hat A_{22}\.=\hat A_2\.T_2'$ is pointwise nonsingular.
Solving the second equation for~$x_2$,
differentiating, and eliminating $x_2$ and $\dot x_2$ from the first equation
yields the inherent ODE
\begin{equation}\label{inhself}
\dot x_1=J^{-1}C(t)x_1+\tilde f_1(t)
\end{equation}
with some transformed inhomogeneity~$\tilde f_1$.

\begin{theorem}\label{th:flowself}
Let $(E,A)$ with $E,A\in C({\mathbb I},{\mathbb R}^{n,n})$ be sufficiently smooth
and let the associated DAE \eqref{lindae} satisfy Hypothesis~\ref{hyp:hyp}.
If $(E,A)$ is self-adjoint, then~$Q$ in \eqref{xtrf} can be chosen
from a restricted class of transformations
in such a way that the so constructed inherent ODE possesses a symplectic flow.
\end{theorem}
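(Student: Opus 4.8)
The plan is to take for the restricted class of transformations exactly the $Q$ produced in the construction carried out just above the statement: $Q=[\,T_2\mid T_2'\,]$, where $T_2$ has pointwise orthonormal columns realizing the third part of Hypothesis~\ref{hyp:hyp} together with the special choice $Z_1=T_2$, and has in addition been post-multiplied by the pointwise congruences $W_1,W_2$ that bring $T_2^TET_2$ into the \emph{constant} normal form $J$. For such a $Q$ the transformed reduced DAE has the block form displayed above, and solving its second equation for $x_2$, differentiating, and eliminating $x_2$ and $\dot x_2$ from the first equation yields the inherent ODE \eqref{inhself}, $\dot x_1=J^{-1}C(t)x_1+\tilde f_1(t)$, with $C$ pointwise symmetric. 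It then suffices to show that the coefficient function $J^{-1}C$ generates a symplectic flow.

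To this end I would verify that $J^{-1}C(t)$ lies pointwise in the Lie algebra \eqref{algebra} associated with $X=J$, i.e.\ that it is Hamiltonian: $(J^{-1}C)^TJ+J(J^{-1}C)=0$. This is immediate from $C^T=C$, $J^T=-J$ and $J^{-1}=-J=J^T$, since $(J^{-1}C)^TJ=C^T(J^T)^{-1}J=-CJ^{-1}J=-C$ while $J(J^{-1}C)=C$, so the two contributions cancel. With this in hand the conclusion follows from the computation already recalled in Section~\ref{sec:intro}: if $\Phi$ denotes the flow of \eqref{inhself}, that is $\dot\Phi=J^{-1}C(t)\Phi$ with $\Phi(t_0)=I_{2p}$, then $\Phi(t_0)^TJ\Phi(t_0)=J$ and $\ddt(\Phi^TJ\Phi)=\Phi^T\big((J^{-1}C)^TJ+J(J^{-1}C)\big)\Phi=0$, so $\Phi(t)^TJ\Phi(t)=J$ on all of $\mathbb{I}$; hence $\Phi$ takes values in $\Sp(2p)$ and the flow is symplectic. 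The inhomogeneity $\tilde f_1$ plays no role, as it does not enter \eqref{flow}.

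The only step that needs genuine care is the pointwise symmetry of $C$, and this is exactly where the restriction on $Q$ is used. After the congruence by $P=Q^T$ the pair $(Q^TEQ,\,Q^TAQ-Q^TE\dot Q)$ is again self-adjoint; comparing the $(1,1)$ blocks of the relation $\tilde A^T=\tilde A+\dot{\tilde E}$ and using that the $(1,1)$ block of $Q^TEQ$ is the constant matrix $J$, one gets $C^T=C+\dot J=C$. This would in general fail for an arbitrary admissible $Q$, so the constancy of that block --- equivalently, the availability of the $p$-fold congruence reducing $T_2^TET_2$ to $J$ --- is the real content, and it has already been established in the discussion preceding the theorem; within the proof itself nothing more is needed than the short algebraic identity above and the invocation of Section~\ref{sec:intro}.
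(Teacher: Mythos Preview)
Your proposal is correct and follows essentially the same approach as the paper. The paper's own proof is extremely brief---it simply points back to the preceding construction and remarks that the resulting coordinates make the inherent ODE Hamiltonian---whereas you spell out explicitly why $J^{-1}C(t)$ lies in the Lie algebra of $\Sp(2p)$ and invoke the flow computation from Section~\ref{sec:intro}; this is a welcome elaboration but not a different argument.
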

\begin{proof}
The above construction shows that it is possible to fix an inherent ODE
with a symplectic flow. It is special in the sense that it
works with pointwise orthogonal transformations
with the exception of $W_2$ which transforms within one half of the variables
and adapts the other half to obtain the matrix~$J$ and thus a set of variables
for which the inherent ODE is Hamiltonian.
\end{proof}

In the special case $\mu=0$ a slightly simplified construction is possible.
Here, Hypothesis~\ref{hyp:hyp} says that $E$ has constant rank allowing to
choose $Q$ in the form \eqref{Qsplit} such that
\[
Q^TEQ=\mat{cc}\hat E_{11}&0\\0&0\rix
\]
with $\hat E_{11}\.=T_2^TET_2\.$ pointwise nonsingular.
Then, the same modifications of $T_2$ as before are possible
leading to a modified $T_2$ with $\hat E_{11}=J$.
With the corresponding modified $Q$, observing $ET_2'=0$, we get that
\[
Q^TEQ=\mat{cc}J&0\\0&0\rix,\quad
Q^TAQ-Q^TE\dot Q=\mat{cc}\hat A_{11}&\hat A_{12}\\\hat A_{21}&\hat A_{22}\rix.
\]
Since congruence conserves self-adjointness, see e.g. \cite{KunMS14}, we have
$\hat A_{11}^T=\hat A_{11}\.$, $\hat A_{12}^T=\hat A_{21}$, and $\hat A_{22}^T=\hat A_{22}\.$.
Moreover, Hypothesis~\ref{hyp:hyp} with $\mu=0$ requires that $\hat A_{22}$ is pointwise nonsingular.
The corresponding reduced DAE, which is here just the original DAE,
transforms to
\[
\arraycolsep 1.2pt
\begin{array}{rl}
J\dot x_1&=\hat A_{11}(t)x_1+\hat A_{12}(t)x_2+T_2(t)^Tf(t),\\[1mm]
0&=\hat A_{12}(t)^Tx_1+\hat A_{22}(t)x_2+T_2'(t)^Tf(t).
\end{array}
\]
Solving the second equation for $x_2$ and eliminating it from the first equation,
we again obtain an inherent ODE of the form \eqref{inhself}, where
\[
C=\hat A_{11}\.-\hat A_{12}\.\hat A_{22}^{-1}\hat A_{12}^T
\]
is pointwise symmetric.

Theoretically, all constructions can be performed globally.
For a numerical realization one typically uses locally smooth variants
as described in Section~\ref{sec:inherent}, which in this case is straightforward
on the basis of locally smooth QR decompositions.

\subsection{Skew-Adjoint DAEs}\label{sub:skew}

Assuming \eqref{skew} for \eqref{lindae}, we will make use of the following
global canonical form taken from \cite{KunM22a} in a slightly rephrased version.

\begin{theorem}\label{th:gcfskew}
Let $(E,A)$ with $E,A\in C({\mathbb I},{\mathbb R}^{n,n})$ be sufficiently smooth
and let the associated DAE \eqref{lindae} satisfy Hypothesis~\ref{hyp:hyp}.
If $(E,A)$ is skew-adjoint, then we have that
\begin{equation}\label{gcfskew}
(E,A)\equiv\left(
\left[\begin{array}{ccc} I_{p}&0&0\\0&-I_{q}&0\\0&0&E_{33} \end{array}\right],
\left[\begin{array}{ccc} 0&0&0\\0&0&0\\0&0&A_{33} \end{array}\right]\right),
\end{equation}
where
\begin{equation}\label{gcfskewq}
E_{33}(t)\dot x_3=A_{33}(t)x_3+f_3(t)
\end{equation}
is uniquely solvable for every sufficiently smooth~$f_3$
without specifying initial conditions. Furthermore,
\begin{equation}\label{gcfskewp}
E_{33}^T=E_{33}\.,\quad A_{33}^T=-A_{33}-\dot E_{33}\.
\end{equation}
\end{theorem}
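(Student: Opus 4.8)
The quickest route is to quote \eqref{gcfskew}--\eqref{gcfskewp} from the skew-adjoint global canonical form of \cite{KunM22a}, of which the statement is only a rephrasing; the construction behind it runs in parallel to the self-adjoint development of Section~\ref{sub:self}, and I would reconstruct it as follows. The guiding observation is that a homogeneous skew-adjoint DAE $E\dot x=Ax$ conserves the quadratic form $x^TEx$, since $\ddt(x^TEx)=x^T(A^T+A+\dot E)x=0$ by \eqref{skew}; this is the skew-adjoint analogue of the symplectic bookkeeping behind Theorem~\ref{th:gcfself}, and it singles out the quadratic Lie group $\O(p,q)$ of \eqref{orthogonal}, with the \emph{symmetric} matrix $E$ (not a skew one) in the role of the defining matrix $X$.

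The plan is to run the $T_2$-based reduction of Section~\ref{sub:self} using congruence transformations only (so that \eqref{skew} is preserved throughout, exactly as there via \eqref{redrel} with $P=Q^T$), which peels off a complementary block $(E_{33},A_{33})$ carrying no differential component and leaves a differential part governed by the $(1,1)$-block $T_2^TET_2$. By Hypothesis~\ref{hyp:hyp} this block is pointwise nonsingular --- which is also what makes the special choice $Z_1=T_2$ admissible --- and $E^T=E$ makes it pointwise symmetric; being nonsingular on the connected interval~${\mathbb I}$ it has constant inertia $(p,q)$, so a smooth congruence, obtained, e.g., from a smooth $L\,S\,L^T$ factorization along the lines of Section~\ref{sec:inherent}, normalizes it to the constant signature matrix $S$ of \eqref{orthogonal}. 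Since $\dot S=0$, invariance of \eqref{skew} under congruence then forces the associated coefficient $A_1$ of the differential part to be pointwise skew-symmetric, and eliminating the algebraic variables as in the derivation of \eqref{inhself} preserves this, so the differential part reads $S\dot x_1=A_1(t)x_1+\tilde f_1(t)$ with $A_1$ skew-symmetric.

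The one further step, which Theorem~\ref{th:gcfself} does not take, is to apply the congruence $x_1=\Phi(t)\tilde x_1$, where $\Phi$ is the flow of $\dot x_1=S^{-1}A_1(t)x_1$: because $S^{-1}A_1$ lies pointwise in the Lie algebra \eqref{algebra} with $X=S$, the computation in Section~\ref{sec:intro} gives $\Phi^TS\Phi=S$, so the leading $E$-block stays equal to $S$ while the $-\Phi^TS\dot\Phi$ contribution cancels $A_1$ identically, which is precisely the zero in the top left of \eqref{gcfskew}. The block $(E_{33},A_{33})$ split off at the start is again skew-adjoint, being obtained by a congruence, which gives \eqref{gcfskewp}; and since it carries no differential component, regularity (Hypothesis~\ref{hyp:hyp}) is exactly the assertion that \eqref{gcfskewq} is uniquely solvable for every sufficiently smooth~$f_3$ without prescribing initial conditions.

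The step I expect to require real care --- and where one should genuinely lean on \cite{KunM22a} rather than improvise --- is the verification that $(E_{33},A_{33})$ detaches from the differential part under a \emph{single, globally smooth} congruence and stays decoupled under the subsequent normalizations, i.e.\ that the $E$-orthogonal complement of the differential subspace is a smooth admissible complement along all of~${\mathbb I}$; by comparison, the smooth normalization of $T_2^TET_2$ to $S$ and the flow-based removal of $A_1$ are routine.
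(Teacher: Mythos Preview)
Your opening sentence is exactly right: the paper does not prove this theorem but imports it (up to rephrasing) from \cite{KunM22a}, so there is no in-paper argument to compare against. The sketch you go on to give is a reasonable outline of how such a proof runs, and the flow-based congruence $x_1=\Phi(t)\tilde x_1$ with $\Phi^TS\Phi=S$ to annihilate the differential $A$-block is indeed the decisive step that has no counterpart in the \emph{statement} of Theorem~\ref{th:gcfself} (where the symmetric block $A_{22}$ survives).

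There is, however, one genuine muddle in your second paragraph. You invoke ``eliminating the algebraic variables as in the derivation of \eqref{inhself}'' to arrive at $S\dot x_1=A_1x_1+\tilde f_1$ with $A_1$ skew-symmetric. Elimination produces an inherent ODE, not a canonical form: the form \eqref{gcfskew} retains the algebraic block $(E_{33},A_{33})$ and requires \emph{both} $E$ and $A$ to be block-diagonal under a single congruence, with zero off-diagonal coupling. What is actually needed at that point is not elimination but precisely the decoupling you flag in your final paragraph as ``the step I expect to require real care''. You are right that this is where the substantive content of \cite{KunM22a} sits; once the differential and algebraic blocks are genuinely decoupled by a globally smooth congruence, the remaining steps --- smooth normalization of the symmetric nonsingular $T_2^TET_2$ to $S$ via the construction of \cite{Kun20}, followed by the $\O(p,q)$-valued flow congruence to kill $A_1$ --- are routine, exactly as you say.
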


In order to construct a suitable reduced DAE \eqref{red1lin}, we proceed
as in the self-adjoint case using the same notation.
For the canonical form, we have
\[
\tilde M_\mu=\mat{ccc|ccc|ccc|c}
I_{p}&0&0&&&&&&&\\
0&-I_{q}&0&&&&&&&\\
0&0&E_{33}&&&&&&&\\\hline
0&0&0&I_{p}&0&&&&\\
0&0&0&0&-I_{q}&0&&&&\\
0&0&\dot E_{33}-A_{33}&0&0&E_{33}&&&&\\\hline
0&0&0&0&0&0&I_{p}&0&0\\
0&0&0&0&0&0&0&I_{q}&0&0\\
0&0&\ddot E_{33}-2\dot A_{33}&0&0&2\dot E_{33}-A_{33}&0&0&E_{33}\\\hline
\vdots&\vdots&\vdots&\vdots&\vdots&\vdots&\vdots&\vdots&\vdots&\ddots
\rix.
\]
Due to the identities, the only possible rank-deficiency is related to the part
belonging to the pair $(E_{33},A_{33})$. The properties of \eqref{gcfskewq}
then imply that $d=p+q$ and $a=n-(p+q)$ in Hypothesis~\ref{hyp:hyp}.
Furthermore, the left null space of~$\tilde M_\mu$ is described by
\[
\tilde Z_2^T=\mat{ccc|ccc|ccc|c}
0&0&\tilde Z_{2,0}^T&0&0&\tilde Z_{2,1}^T&0&0&\tilde Z_{2,2}^T&\cdots
\rix.
\]
Observing that
\[
\tilde N_\mu[I_n\>0\>\cdots\>0]^T=\mat{ccc}
0&0&0\\
0&0&0\\
0&0&A_{33}\\\hline
0&0&0\\
0&0&0\\
0&0&\dot A_{33}\\\hline
0&0&0\\
0&0&0\\
0&0&\ddot A_{33}\\\hline
\vdots&\vdots&\vdots
\rix,
\]
we get that
\[
\hat A_2=\mat{ccc}0&0&I_a\rix
\]
for the second part of Hypothesis~\ref{hyp:hyp}, where the identity
comes from a special choice of $\tilde Z_2^T$. Choosing
\[
\tilde T_2=\mat{cc}I_p&0\\0&I_q\\0&0\rix
\]
and $\tilde Z_1=\tilde T_2$ yields
\[
\tilde Z_1^T\tilde E\tilde T_2\.=
\mat{ccc}I_p&0&0\\0&I_p&0\rix
\mat{ccc}I_{p}&0&0\\0&-I_{q}&0\\0&0&E_{33}\rix
\mat{cc}I_p&0\\0&I_q\\0&0\rix=
\mat{cc}I_{p}&0\\0&-I_{q}\rix,
\]
which is indeed pointwise nonsingular, thus satisfying the third part of Hypothesis~\ref{hyp:hyp}.
In particular, the special choice $\tilde Z_1=\tilde T_2$ is possible.
According to \eqref{redrel} with $P=Q^T$ we can also choose $Z_1=T_2$
for the original pair such that the reduced DAE inherits some symmetry properties
of the original DAE. Note also that we may assume that $T_2$ possesses pointwise
orthonormal columns.

By construction, the matrix function $T_2^TET_2\.$ is not only pointwise symmetric
but also pointwise nonsingular. We can then apply the results of \cite{Kun20},
which guarantee the existence of a smooth matrix function~$W$ with
\[
W^TT_2^TET_2\.W=\mat{cc}I_p&0\\0&-I_q\rix=S.
\]

For convenience, we write again $T_2$ instead of the transformed $T_2W$.
Completing $T_2$ to a pointwise nonsingular~$Q$
according to \eqref{Qsplit}, we get
\[
Q^TEQ=\mat{cc}S&\hat E_{12}\\{}*&*\rix,\quad
Q^TAQ-Q^TE\dot Q=\mat{cc}J&\hat A_{12}\\{}*&*\rix.
\]
Since skew-adjointness is invariant under congruence, see \cite{BeaMXZ18,KunM22a}, and $S$ is constant,
the matrix function $J$ is pointwise skew-symmetric.
With \eqref{xtrf} the reduced DAE transforms to
\[
\arraycolsep 1.2pt
\begin{array}{rl}
S\dot x_1+\hat E_{12}(t)\dot x_2&=J(t)x_1+\hat A_{12}(t)x_2+T_2(t)^Tf(t),\\[1mm]
0&=\hat A_{22}(t)x_2+\hat f_2(t),
\end{array}
\]
where $\hat A_{22}\.=\hat A_2\.T_2'$ is pointwise nonsingular.
Solving the second equation for~$x_2$,
differentiating, and eliminating $x_2$ and $\dot x_2$ from the first equation
yields the inherent ODE
\begin{equation}\label{inhskew}
\dot x_1=S^{-1}J(t)x_1+\tilde f_1(t)
\end{equation}
with a transformed inhomogeneity~$\tilde f_1$.

\begin{theorem}\label{th:flowskew}
Let $(E,A)$ with $E,A\in C({\mathbb I},{\mathbb R}^{n,n})$ be sufficiently smooth
and let the associated DAE \eqref{lindae} satisfy Hypothesis~\ref{hyp:hyp}.
If $(E,A)$ is skew-adjoint, then~$Q$ in \eqref{xtrf} can be chosen
from a restricted class of transformations
in such a way that the so constructed inherent ODE possesses a generalized orthogonal flow.
\end{theorem}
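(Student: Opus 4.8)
The plan is to mirror the construction carried out in the self-adjoint case, but now driving the symmetric matrix function $T_2^TET_2$ to the constant matrix $S$ instead of the skew-symmetric normal form $J$. First I would recall that the analysis preceding the theorem already establishes the key structural facts: Hypothesis~\ref{hyp:hyp} applied to the global canonical form \eqref{gcfskew} shows that one may choose $Z_1=T_2$ with pointwise orthonormal columns (and that this choice propagates to the original pair via \eqref{redrel} with $P=Q^T$), so that $T_2^TET_2$ is pointwise symmetric and pointwise nonsingular. Then I would invoke \cite{Kun20} to obtain a smooth pointwise nonsingular matrix function $W$ with $W^TT_2^TET_2W=S$; replacing $T_2$ by $T_2W$ and completing to a pointwise nonsingular $Q=[\,T_2\;\;T_2'\,]$ according to \eqref{Qsplit} puts the transformed pair in the block form displayed above \eqref{inhskew}, with $S$ constant in the leading block and, by invariance of skew-adjointness under congruence, the leading block $J$ of $Q^TAQ-Q^TE\dot Q$ pointwise skew-symmetric.

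Next I would carry out the elimination exactly as in the passage leading to \eqref{inhskew}: transforming the reduced DAE \eqref{red1} by \eqref{xtrf}, using that $\hat A_{22}=\hat A_2T_2'$ is pointwise nonsingular by Hypothesis~\ref{hyp:hyp}, solving the algebraic block for $x_2$, differentiating, and eliminating $x_2$ and $\dot x_2$ from the differential block. This yields the inherent ODE $\dot x_1=S^{-1}J(t)x_1+\tilde f_1(t)$. The final step is to check that the coefficient matrix $S^{-1}J(t)$ lies pointwise in the Lie algebra \eqref{algebra} associated with $X=S$ as in \eqref{orthogonal}: since $S^T=S$, $S^{-1}=S$, and $J^T=-J$, we have $(S^{-1}J)^TS+S(S^{-1}J)=J^TS^{-1}S+SS^{-1}J=J^T+J=0$. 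By the computation following \eqref{algebra}, the flow of the inherent ODE then lies in the quadratic Lie group $\O(p,q)$ of generalized orthogonal matrices, i.e., the flow is a generalized orthogonal flow, which is exactly the claim. It should also be noted that the restricted class of transformations consists here of the orthogonal $T_2$ followed by the single congruence $W$ supplied by \cite{Kun20}, which plays the role that $W_1W_2$ played in the self-adjoint case.

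The main obstacle is the existence of the smooth symmetrizer $W$ with $W^TT_2^TET_2W=S$: unlike the skew-symmetric case, where the anti-triangular reduction to $J$ can be done by hand with orthogonal steps plus one triangular correction, the symmetric case requires a smooth simultaneous diagonalization of inertia, which is genuinely delicate because eigenvalues of $T_2^TET_2$ may cross. Here I would lean entirely on \cite{Kun20}, whose results guarantee precisely such a smooth $W$ given that $T_2^TET_2$ is pointwise symmetric, pointwise nonsingular, and of constant inertia $(p,q)$ — the constant inertia being forced by the global canonical form \eqref{gcfskew} together with continuity and connectedness of $\mathbb I$. The remaining work (the block elimination and the Lie-algebra verification) is routine linear algebra and parallels the self-adjoint case line for line. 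Finally, as in Section~\ref{sec:inherent}, the global construction can be replaced by locally smooth variants based on QR decompositions for an actual numerical realization, but this does not affect the structural statement of the theorem.
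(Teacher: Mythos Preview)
Your proposal is correct and follows essentially the same route as the paper: the paper's proof simply points back to the construction preceding the theorem, which is exactly the sequence you describe (choose $Z_1=T_2$ via the canonical form, invoke \cite{Kun20} for the smooth $W$ with $W^TT_2^TET_2W=S$, complete to $Q$, eliminate to reach \eqref{inhskew}), and then notes that the restricted class consists of pointwise orthogonal transformations together with the single exceptional factor $W$. Your explicit Lie-algebra verification $(S^{-1}J)^TS+S(S^{-1}J)=J^T+J=0$ is a welcome addition that the paper leaves implicit.
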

\begin{proof}
The above construction shows that it is possible to fix an inherent ODE
with a generalized orthogonal flow. It is special in the sense that it
works with pointwise orthogonal transformations
with the exception of $W$.
\end{proof}

In the special case that $\mu=0$, a slightly simplified construction is possible.
Here, Hypothesis~\ref{hyp:hyp} implies that $E$ has constant rank allowing to
choose $Q$ in the form \eqref{Qsplit} such that
\[
Q^TEQ=\mat{cc}\hat E_{11}&0\\0&0\rix
\]
with $\hat E_{11}\.=T_2^TET_2\.$ pointwise nonsingular.
Then, the same modifications of $T_2$ as before are possible
leading to a modified $T_2$ with $\hat E_{11}=S$.
With the corresponding modified $Q$, observing $ET_2'=0$, we get
\[
Q^TEQ=\mat{cc}S&0\\0&0\rix,\quad
Q^TAQ-Q^TE\dot Q=\mat{cc}\hat A_{11}&\hat A_{12}\\\hat A_{21}&\hat A_{22}\rix.
\]
Since congruence transformations conserve skew-adjointness, we have
$\hat A_{11}^T=-\hat A_{11}\.$, $\hat A_{12}^T=-\hat A_{21}$, and $\hat A_{22}^T=-\hat A_{22}\.$.
Moreover, Hypothesis~\ref{hyp:hyp} with $\mu=0$ requires that $\hat A_{22}$ is pointwise nonsingular.
The corresponding reduced DAE, which is here just the original DAE,
transforms to
\[
\arraycolsep 1.2pt
\begin{array}{rl}
S\dot x_1&=\hat A_{11}(t)x_1+\hat A_{12}(t)x_2+T_2(t)^Tf(t),\\[1mm]
0&=\hat A_{12}(t)^Tx_1+\hat A_{22}(t)x_2+T_2'(t)^Tf(t).
\end{array}
\]
Solving the second equation for $x_2$ and eliminating it from the first equation,
we again obtain an inherent ODE of the form \eqref{inhself}, where
\[
J=\hat A_{11}\.-\hat A_{12}\.\hat A_{22}^{-1}\hat A_{12}^T
\]
is pointwise skew-symmetric.

Theoretically, all constructions can be performed globally.
For a numerical realization one typically uses locally smooth variants
as described in Section~\ref{sec:inherent}. The only exception is the
construction of a suitable $W$, where we are still in need of a locally
smooth variant to be used within an integration.
One possibility is given in the following, cp.~\cite{Kun20}.

We start with a reference factorization
\[
W_0^T\hat E_{11}(t_0)W_0=S
\]
which may be obtained by solving the symmetric eigenvalue problem
and then scaling the eigenvalues by congruence to $\pm1$ or by a Cholesky-like
factorization for indefinite matrices as given by~\cite{BunK77}.
We then consider the matrix function
\[
W_0^T\hat E_{11}\.W_0\.=\mat{cc}\tilde E_{11}&\tilde E_{12}\\\tilde E_{21}&\tilde E_{22}\rix,
\]
where $\tilde E_{11}^T=\tilde E_{11}\.$, $\tilde E_{12}^T=\tilde E_{21}\.$,
and $\tilde E_{22}^T=\tilde E_{22}\.$. In a sufficiently small neighborhood,
the entry $\tilde E_{11}$ is close to $I_p$, the entry $\tilde E_{22}$
is close to $-I_q$, and the entry $\tilde E_{12}$ is small in norm.
In particular, the entry $\tilde E_{11}$ is symmetric positive definite
allowing for a Cholesky factorization
\[
\tilde E_{11}\.=L_{11}\.L_{11}^T,
\]
which is a smooth process. We then get
\[
\mat{cc}L_{11}^{-1}&0\\-\tilde E_{12}^T\tilde E_{11}^{-1}&I_q\rix
\mat{cc}\tilde E_{11}\.&\tilde E_{12}\.\\\tilde E_{12}^T&\tilde E_{22}\.\rix
\mat{cc}L_{11}^{-T}&-\tilde E_{11}^{-1}\tilde E_{12}\.\\0&I_q\rix=
\mat{cc}I_p&0\\0&\tilde E_{22}\.-\tilde E_{12}^T\tilde E_{11}^{-1}\tilde E_{12}\.\rix.
\]
In a sufficiently small neighborhood, the Schur complement
$\tilde E_{22}\.-\tilde E_{12}^T\tilde E_{11}^{-1}\tilde E_{12}\.$
is symmetric negative definite allowing for a Cholesky factorization
\[
-(\tilde E_{22}\.-\tilde E_{12}^T\tilde E_{11}^{-1}\tilde E_{12})=L_{22}\.L_{22}^T,
\]
such that
\[
\mat{cc}I_p&0\\0&L_{22}^{-1}\rix
\mat{cc}I_p&0\\0&\tilde E_{22}\.-\tilde E_{12}^T\tilde E_{11}^{-1}\tilde E_{12}\.\rix
\mat{cc}I_p&0\\0&L_{22}^{-T}\rix=
\mat{cc}I_p&0\\0&-I_q\rix=S.
\]
Gathering all transformations gives the locally smooth
\[
W=W_0\mat{cc}L_{11}^{-T}&-\tilde E_{11}^{-1}\tilde E_{12}\.\\0&I_q\rix
\mat{cc}I_p&0\\0&L_{22}^{-T}\rix
\]
and all steps can be executed numerically in a smooth way using automatic differentiation.
\section{Numerical Experiments}\label{sec:num}

The presented numerical method has been implemented using automatic differentiation in order to be able to
evaluate all needed derivatives and Jacobians.
For the determination of $\deriv{Q}{\dot Q}$ on the current interval $[t_0,t_0+h]$
one can choose between the following possibilities.
\[
\begin{array}{|l|l|}\hline
\texttt{INHERENT}&Q(t)=Q_0\\\hline
\texttt{SPIN\string_STABILIZED}&Q(t)=Q_0+(t-t_0)\dot Q_0\\\hline
\texttt{ROTATED}&Q=[\>T_2\.\>\>T_2'\>],\ \hat E_1\.T_2'=0\\\hline
\texttt{SELF\string_ADJOINT}&Q\mbox{ as described in Subsection~\ref{sub:self}}\\\hline
\texttt{SKEW\string_ADJOINT}&Q\mbox{ as described in Subsection~\ref{sub:skew}}\\\hline
\texttt{PRESCRIBED}&Q\mbox{ by user-provided routine}\\\hline
\end{array}
\]
In all cases except for the last one, one can choose between the general approach,
which includes transformation to a reduced DAE, and the simplified approach assuming that
no such transformation is necessary. Schemes based on the direct discretization
of \eqref{red1} are labelled as \texttt{DIRECT}. As numerical integration methods we use the
following discretization methods, see e.g.\ \cite{HaiW96,KunM06}.
\[
\begin{array}{|l|l|}\hline
\texttt{GAUSS-LOBATTO}&\vtop{\hsize 9.0cm\noindent\baselineskip 11pt
collocation methods for DAEs based on Gau\ss\ nodes for the differential part
and Lobatto nodes for the algebraic part, see \cite{KunMS04a}}\\\hline
\texttt{RADAU}&\vtop{\hsize 9.0cm\noindent\baselineskip 11pt
collocation methods for DAEs based on Radau nodes the simplest of which is the implicit Euler method}\\\hline
\texttt{DORMAND-PRINCE}&\vtop{\hsize 9.0cm\noindent\baselineskip 11pt
Runge-Kutta-Fehlberg methods for ODEs, see \cite{HaiNW87}}\\\hline
\texttt{GAUSS}&\vtop{\hsize 9.0cm\noindent\baselineskip 11pt
collocation methods for ODEs based on Gau\ss\ nodes}\\\hline
\end{array}
\]

\begin{experiment}\label{tst:wensch}\rm
The linear DAE
\[
\mat{cc} \delta -1 & \delta t \\ 0 & 0 \rix
\mat{c} \dot x_1 \\ \dot x_2 \rix =
\mat{cc}-\eta(\delta -1) & -\eta\delta t \\ \delta -1 & \delta t-1 \rix
\mat{c} x_1 \\ x_2 \rix + \mat{c} f_1(t) \\ f_2(t) \rix,
\]
cp.\ \cite{MaeR02}, with real parameters $\eta$ and $\delta\ne 1$
is constructed in such a way that direct discretization by the implicit Euler method
corresponds to the discretization of an inherent ODE by the explicit Euler method.
Setting $\delta=-10^5,\ \eta=0$ yields a stiff inherent ODE and we expect
stability problems when working directly with the implicit Euler method.
For our numerical experiments we have chosen $f_1,f_2$ and the initial condition
so that the solution is given by $x_1(t)=x_2(t)=\exp(-t)$.
Integration interval was $[0,1]$ and tolerance was $10^{-5}$.
The following table gives the cpu times and the number of integration steps
for the various versions of the implicit Euler method.
\[
\begin{array}{|l|c|c|}\hline
\textrm{version}&\textrm{cpu time}&\textrm{steps}\\\hline\hline
\texttt{DIRECT}&\texttt{10.31}&\texttt{97840}\\\hline
\texttt{INHERENT}&\texttt{~0.73}&\texttt{~~~10}\\\hline
\texttt{SPIN\string_STABILIZED}&\texttt{~0.60}&\texttt{~~~10}\\\hline
\texttt{ROTATED}&\texttt{~0.67}&\texttt{~~~10}\\\hline
\end{array}
\]
The stabilizing effect of discretizing an inherent ODE is obvious.
The three different versions in the choice of the inherent ODE
do not differ significantly.
\end{experiment}

\begin{experiment}\label{tst:pendulum}\rm
A mathematical model of a pendulum is given by the DAE
\[
\arraycolsep 1.2pt
\begin{array}{rl}
\dot x_3&=x_1,\\
\dot x_4&=x_2,\\
-\dot x_1&=2x_3x_5,\\
-\dot x_2&=1+2x_4x_5,\\
0&=x_3^2+x_4^2-1,
\end{array}
\]
which is known to satisfy Hypothesis~\ref{hyp:hyp} with $\mu=2$, $a=3$, and $d=2$.
The equations and unknowns are ordered in such a way that
\[
F_{\dot x}(t,x,\dot x)=\mat{ccccc}
0&0&1&0&0\\
0&0&0&1&0\\
-1&0&0&0&0\\
0&-1&0&0&0\\
0&0&0&0&0\rix,\quad
F_{x}(t,x,\dot x)=\mat{ccccc}
1&0&0&0&0\\
0&1&0&0&0\\
0&0&2x_5&0&2x_3\\
0&0&0&2x_5&2x_4\\
0&0&2x_3&2x_4&0\rix.
\]
Hence, $(F_{\dot x},F_{x})$ is self-adjoint for all arguments.
The constructions of Section~\ref{sec:symmetries}, however, are
only valid for linear DAEs and therefore not applicable.
The only valid use of an inherent ODE as presented here is by the versions
\texttt{INHERENT} and \texttt{PRESCRIBED}, since in the nonlinear case
the Jacobians do not only depend on~$t$. The following table shows
the performance of various discretization schemes when integrating
over the interval $[0,10]$ with stepsize control starting with $x(0)=(0,0,1,0,0)^T$
and using a tolerance of $10^{-5}$.
\[
\begin{array}{|l|l|c|c|c|c|}\hline
\textrm{method}&\textrm{version}&\textrm{stages}&\textrm{order}&\textrm{cpu time}&\textrm{steps}\\\hline\hline
\texttt{GAUSS-LOBATTO}&\texttt{DIRECT}&\texttt{2-3}&\texttt{4}&\texttt{~0.93}&\texttt{55}\\\hline
\texttt{RADAU}&\texttt{DIRECT}&\texttt{4}&\texttt{7}&\texttt{~0.99}&\texttt{28}\\\hline
\texttt{DORMAND-PRINCE}&\texttt{INHERENT}&\texttt{7}&\texttt{4}&\texttt{~1.33}&\texttt{47}\\\hline
\texttt{DORMAND-PRINCE}&\texttt{INHERENT}&\texttt{13}&\texttt{7}&\texttt{~1.29}&\texttt{28}\\\hline
\texttt{GAUSS}&\texttt{INHERENT}&\texttt{2}&\texttt{4}&\texttt{11.76}&\texttt{55}\\\hline
\texttt{RADAU}&\texttt{INHERENT}&\texttt{4}&\texttt{7}&\texttt{12.92}&\texttt{34}\\\hline
\end{array}
\]
In particular, we observe that we are able to solve the given problem
by explicit schemes for the chosen inherent ODE with nearly the same efficiency
as the standard direct methods.
\end{experiment}

In the following experiments we measure the geometric error in the flow~$\Phi$
with respect to a quadratic Lie group \eqref{group} by $\|\Phi^TX\Phi-X\|$,
where $\|\Delta\|=\max_{i,j=1,\ldots,n}|\Delta_{ij}|$
for $\Delta=[\Delta_{ij}]\in{\mathbb R}^{n,n}$.

\begin{experiment}\label{tst:self}\rm
The self-adjoint DAE $E(t)\dot x=A(t)x$ given by
\[
E=Q^T\hat EQ,\quad A=Q^T\hat AQ-Q^T\hat E\dot Q,
\]
where
\[
\hat E=\mat{ccc}0&1&0\\-1&0&0\\0&0&0\rix,\quad
\hat A=\mat{ccc}1&0&0\\0&1&0\\0&0&1\rix,\quad
Q=\mat{ccc}1&s&0\\s&1&s\\0&s&1\rix,
\]
with $s(t)=\frac12\sin\omega t,\ \omega=1$,
possesses a symplectic flow with respect to the first two components
of the transformed unknown $\hat x=Qx$.

The following table shows the performance and the maximal geometric error
in the flow for various discretization schemes when integrating
over the interval $[0,200\pi]$ using $1,000$ equidistant steps.
We used the simplified approach due to $\mu=0$.

\[
\begin{array}{|l|l|c|c|c|c|}\hline
\textrm{method}&\textrm{version}&\textrm{stages}&\textrm{order}&\textrm{cpu time}&\textrm{error}\\\hline\hline
\texttt{GAUSS-LOBATTO}&\texttt{DIRECT}&\texttt{2-3}&\texttt{4}&\texttt{~1.44}&\texttt{1.380e-02}\\\hline
\texttt{DORMAND-PRINCE}&\texttt{INHERENT}&\texttt{7}&\texttt{4}&\texttt{~5.36}&\texttt{2.468e-01}\\\hline
\texttt{GAUSS}&\texttt{ROTATED}&\texttt{2}&\texttt{4}&\texttt{23.68}&\texttt{7.281e-04}\\\hline
\texttt{GAUSS}&\texttt{SELF\string_ADJOINT}&\texttt{2}&\texttt{4}&\texttt{24.88}&\texttt{1.224e-07}\\\hline
\end{array}
\]
\end{experiment}

\begin{experiment}\label{tst:skew}\rm
The skew-adjoint DAE $E(t)\dot x=A(t)x$ given by
\[
E=Q^T\hat EQ,\quad A=Q^T\hat AQ-Q^T\hat E\dot Q,
\]
where
\[
\hat E=\mat{cccc}1&0&0&0\\0&1&0&0\\0&0&0&0\\0&0&0&0\rix,\quad
\hat A=\mat{cccc}0&1&0&0\\-1&0&0&0\\0&0&0&1\\0&0&-1&0\rix,\quad
Q=\mat{cccc}1&s&0&0\\s&1&s&0\\0&s&1&s\\0&0&s&1\rix,\quad
\]
with $\smash{s(t)=\frac12\sin\omega t,\ \omega=1}$,
possesses an orthogonal flow with respect to the first two components
of the transformed unknown $\hat x=Qx$.

The following table shows the performance and the maximal geometric error
in the flow for various discretization schemes when integrating
over the interval $[0,200\pi]$ using $1,000$ equidistant steps.
We used the simplified approach due to $\mu=0$.

\[
\begin{array}{|l|l|c|c|c|c|}\hline
\textrm{method}&\textrm{version}&\textrm{stages}&\textrm{order}&\textrm{cpu time}&\textrm{error}\\\hline\hline
\texttt{GAUSS-LOBATTO}&\texttt{DIRECT}&\texttt{2-3}&\texttt{4}&\texttt{~2.05}&\texttt{1.226e-01}\\\hline
\texttt{DORMAND-PRINCE}&\texttt{INHERENT}&\texttt{7}&\texttt{4}&\texttt{12.52}&\texttt{1.965e-02}\\\hline
\texttt{GAUSS}&\texttt{ROTATED}&\texttt{2}&\texttt{4}&\texttt{74.85}&\texttt{9.363e+00}\\\hline
\texttt{GAUSS}&\texttt{SKEW\string_ADJOINT}&\texttt{2}&\texttt{4}&\texttt{80.64}&\texttt{1.312e-07}\\\hline
\end{array}
\]
\end{experiment}

\begin{experiment}\label{tst:indefite}\rm
The skew-adjoint DAE $E(t)\dot x=A(t)x$ given by
\[
E=Q^T\hat EQ,\quad A=Q^T\hat AQ-Q^T\hat E\dot Q,
\]
where
\[
\hat E=\mat{ccccc}1&0&0&0&0\\0&1&0&0&0\\0&0&-1&0&0\\0&0&0&0&0\\0&0&0&0&0\rix,\quad
\hat A=\mat{ccccc}0&1&0&0&0\\-1&0&0&0&0\\0&0&0&0&0\\0&0&0&0&1\\0&0&0&-1&0\rix,\quad
Q=\mat{ccccc}1&s&0&0&0\\s&1&s&0&0\\0&s&1&s&0\\0&0&s&1&s\\0&0&0&s&1\rix,\quad
\]
with $\smash{s(t)=\frac12\sin\omega t,\ \omega=1}$,
possesses a generalized orthogonal flow in $\O(2,1)$ with respect to the first three components
of the transformed unknown $\hat x=Qx$.

The following table shows the performance and the maximal geometric error
in the flow for various discretization schemes when integrating
over the interval $[0,200\pi]$ using $1,000$ equidistant steps.
We used the simplified approach due to $\mu=0$.

\[
\begin{array}{|l|l|c|c|c|c|}\hline
\textrm{method}&\textrm{version}&\textrm{stages}&\textrm{order}&\textrm{cpu time}&\textrm{error}\\\hline\hline
\texttt{GAUSS-LOBATTO}&\texttt{DIRECT}&\texttt{2-3}&\texttt{4}&\texttt{~~3.33}&\texttt{4.548e-01}\\\hline
\texttt{DORMAND-PRINCE}&\texttt{INHERENT}&\texttt{7}&\texttt{4}&\texttt{~32.40}&\texttt{8.957e-01}\\\hline
\texttt{GAUSS}&\texttt{ROTATED}&\texttt{2}&\texttt{4}&\texttt{226.55}&\texttt{6.912e-01}\\\hline
\texttt{GAUSS}&\texttt{SKEW\string_ADJOINT}&\texttt{2}&\texttt{4}&\texttt{288.55}&\texttt{1.858e-07}\\\hline
\end{array}
\]
\end{experiment}
\section{Conclusions}\label{sec:conclusions}
We have presented discretization methods for DAEs that are based on the integration
of an inherent ODE which is extracted from the derivative array equations
associated with the given DAE utilizing automatic differentiation.
We have shown that for this inherent ODE we can use classical discretization schemes
for the numerical integration of ODEs that cannot be used be for DAEs directly.
For self-adjoint and skew-adjoint linear time-varying DAEs
we have shown that the inherent ODE can be constructed in such a way
that it inherits these symmetry properties of the given DAE and thus
also the geometric properties of its flow.
We then have exploited this to construct geometric integration schemes
with a numerical flow that preserves these geometric properties.

\bibliographystyle{plain}

\end{document}